\DeclareMathOperator{\spt}{spt}
\def\A{\mathcal{A}}
\def\V{\mathcal{V}}
\def\eps{{\varepsilon}}
\def\O{\Omega}
\def\N{\mathbb{N}}
\def\R{\mathbb{R}}
\newcommand{\be}{\begin{equation}}
\newcommand{\ee}{\end{equation}}
\newcommand{\bib}[4]{\bibitem{#1}{\sc#2: }{\it#3. }{#4.}}
\newcommand{\sdi}{\sigma_{dis}}
\newcommand{\sdim}{\sigma^-_{dis}}
\numberwithin{equation}{section} \theoremstyle{plain}
\newtheorem{theo}{Theorem}[section]
\newtheorem{lemm}[theo]{Lemma}
\theoremstyle{remark}
\newtheorem{rema}[theo]{Remark}
\newtheorem{exam}[theo]{Example}
\theoremstyle{figure}
\title{Optimal design problems for Schr\"odinger operators\\ with noncompact resolvents}
\author{Guy Bouchitt\'e, Giuseppe Buttazzo}
\date{29 January 2015}
\begin{document}
\maketitle

\begin{abstract}
We consider optimization problems for cost functionals which depend on the negative spectrum of Schr\"odinger operators of the form $-\Delta+V(x)$, where $V$ is a potential, with prescribed compact support, which has to be determined. Under suitable assumptions the existence of an optimal potential is shown. This can be applied to 
interesting cases such as costs functions involving finitely many negative eigenvalues.
\end{abstract}

\textbf{Keywords:} optimal potentials, Schr\"odinger operators, Lieb-Thirring inequality

\textbf{2010 Mathematics Subject Classification:} 49J45, 35J10, 58C40, 49R05, 35P15.

%%%%%%%%%%%%%%%%%%%%%%%%%%%%%%
\section{Introduction}\label{sintro}

Optimization problems for spectral functionals are widely studied in the literature; in a general framework one may consider an admissible class $\A$ of operators and the problem is then formulated as
\be\label{genpb}
\min\big\{F\big(\sigma(A)\big)\ :\ A\in\A\big\}
\ee
where $\sigma(A)$ denotes the spectrum of the operator $A\in\A$ and $F$ is a suitable given cost function that depends on $\sigma(A)$.

The most studied case is when the admissible class $\A$ of operators consists of the Laplace operator $-\Delta$ over a variable domain $\O$, with homogeneous Dirichlet boundary conditions on $\partial\O$. If the Lebesgue measure $|\O|$ is supposed finite, the resolvent operators are compact and then their spectrum reduces to an increasing sequence of positive eigenvalues, so than the optimization problem \eqref{genpb} takes the form
$$\min\big\{F\big(\lambda_1(\O),\lambda_2(\O),\dots\big)\ :\ \O\in\O\big\}$$
where $\O$ indicates the class of admissible domains. We refer to \cite{bb05}, \cite{bremc} and to the references therein for a survey on this topic and for the various existence results that are available in this situation.

Optimization problems of the form \eqref{genpb} have been also considered in \cite{jep} for operators of Schr\"odinger type $-\Delta+V(x)$, under the assumption $V\ge0$ and on a fixed bounded domain, on the boundary of which the homogeneous Dirichlet conditions are imposed. Again, the resolvent operators are compact, hence their spectra are discrete and the optimization problem \eqref{genpb} takes the form
$$\min\big\{F\big(\lambda_1(V),\lambda_2(V),\dots\big)\ :\ V\in\V\big\}$$
where $\V$ indicates now the class of admissible potentials. Several existence results for optimal potentials have been obtained in \cite{jep} in this situation.

In the present paper we consider Schr\"odinger operators $-\Delta+V(x)$ where the potential $V$ is assumed to be compactly supported and is allowed to become negative. Thus the resolvent operators are not any more compact and the spectrum $\sigma(V)$, besides its continuous part, exhibits discrete negative eigenvalues. Such a situation occurs for instance in the context of very thin quantum waveguides, where a one-dimensional effective potential, depending on local curvature and twist, appears explicitly (see for instance \cite{CDFK}, \cite{BMT}, \cite{BMT2}). The optimization problems we consider are described in Theorems \ref{exth} and \ref{exth2}, in which we show the existence of optimal potentials.

Some examples illustrate the range of possibilities in which our existence results apply.

Along all the paper, the notation of function spaces $L^2$, $H^1$ and similar, without the indication of the domain of definition, is used when the domain is the whole $\R^d$. Similarly, the absence of the domain of integration in an integral means that the integral is made on the whole $\R^d$.

%%%%%%%%%%%%%%%%%%%%%%%%%%%%%%
\section{Presentation of the problem}\label{spres}

The problems we aim to consider are of the form
\be\label{prob}
\min\big\{F(V)\ :\ V\in\A\big\}
\ee
where $F$ is a suitable cost functional and $\A$ is a suitable class of admissible potentials defined on $\R^d$. In order to simplify the presentation, we assume that all the potentials we consider have a support contained in a given compact set $K$. The admissible potentials may change sign and indeed their negative parts are mostly important for our purposes; the class $\A$ is then defined as
$$\A=\big\{V:\R^d\to\overline\R,\ \spt V\subset K\big\}.$$

For a Schr\"odinger operator $-\Delta+V(x)$ we denote by $\sigma(V)$ its spectrum and by $\sdi(V)$ its discrete part, consisting of {\it isolated} eigenvalues; finally $\sdim(V)$ will denote the part of $\sdi(V)$ which consists of {\it strictly negative} eigenvalues. By the Cwikel-Lieb-Rosenbljum bound (see for instance \cite{resi}) it is known that
\be\label{CLR}
\#\sdim(V)\le C_{q,d}\int|V^-|^q\,dx\qquad\forall d\ge3,\ \forall q\ge d/2.
\ee
where the eigenvalues are counted with their multiplicity. Other important inequalities we will use are the Lieb-Thirring inequality (see for instance \cite{lawe00,lieb})
which is valid in any dimension $d$.
\be\label{liethi}
\sum_{\lambda\in\sdim(V)}|\lambda|^{p-d/2}\le L_{p,d}\int|V^-|^p\,dx\qquad\forall p>d/2.
\ee
and the Keller inequality (see for instance \cite{CFL})
\be\label{Keller}
|\lambda_1|^{p-d/2}\le K_{p,d}\int|V^-|^p\,dx\qquad\forall p>d/2.
\ee

The cost functionals we consider are of the following two classes: 
\be\label{cost}
F(V)=\sum_{\lambda\in\sdim(V)} m_V(\lambda)\, h(\lambda)+k\int|V|^p\,dx
\ee
\be\label{cost2}
F(V)=g\big(\Phi(\sdim(V))\big)+k\int|V|^p\,dx \ .
\ee 
In definition \eqref{cost}, $k$ is a given positive number, $p>d/2$ and $m_V(\lambda)$ denotes the multiplicity of the eigenvalue $\lambda$. The function $h:\R\to]-\infty,+\infty]$ is a given lower semicontinuous function.

In definition \eqref{cost2}, we denoted by $\Phi$ the map which sends $\sdim(V)$ into the space $c_0(\R^-)$ of vanishing sequences of negative real numbers, defined as follows: let $\lambda_1\ge\lambda_2\ge\lambda_3\dots$ be an enumeration of the elements of $\sdim(V)$ in increasing order and repeated according to their multiplicity; then we set
$$\Phi(\sdim(V)):=\begin{cases}
\{\lambda_1,\dots,\lambda_N,0,0,\dots\}&\hbox{if }\#\sdim(V)=N\\
\{\lambda_1,\lambda_2,\dots\}&\hbox{if }\#\sdim(V)=+\infty.
\end{cases}$$
The function $g$ is a given function on $c_0(\R^-)$ with values in $]-\infty, +\infty]$.

Our main results are the existence of optimal potentials for the minimization problem \eqref{prob}, as precised in the following Theorems.

\begin{theo}\label{exth}
Let $F$ be a cost functional as in \eqref{cost}. We assume that the function $h$ satisfies 
\be\label{h0}
h(0)\ge0
\ee
and the following coercivity condition:
\be\label{lowerh}
\begin{split}
&h^-(t)\le M+c|t|^{p-d/2}\qquad\forall t<0\quad\text{if $d\ge3$} \\
&h^-(t)\le c|t|^{p-1}\qquad\forall t<0\quad\text{if $d=2$}
\end{split}
\ee
for suitable positive constants $M,c$ with $c<k/L_{p,d}$. Then the minimization problem
$$\min\Big\{F(V)\ :\ \spt V\subset K\Big\}$$
admits a solution.
\end{theo}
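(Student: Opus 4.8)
The plan is to apply the direct method of the calculus of variations. First I would take a minimizing sequence $(V_n)$ and establish coercivity. Writing $h=h^+-h^-$ and using the lower bound \eqref{lowerh} together with the Lieb-Thirring inequality \eqref{liethi}, the negative part of the spectral term, namely $\sum_{\lambda\in\sdim(V_n)}m_{V_n}(\lambda)h^-(\lambda)$, is bounded by $M\,\#\sdim(V_n)+c\,L_{p,d}\int|V_n^-|^p$ (the term with $M$ being absent when $d=2$). For $d\ge3$ I would control $\#\sdim(V_n)$ through the Cwikel-Lieb-Rosenbljum inequality \eqref{CLR} with an exponent $q\in[d/2,p)$ and Hölder's inequality on the compact set $K$, obtaining $\#\sdim(V_n)\le C\big(\int|V_n|^p\big)^{q/p}$ with $q/p<1$; since $c<k/L_{p,d}$, the coercive term $k\int|V_n|^p$ dominates and $F(V_n)\to+\infty$ as $\int|V_n|^p\to\infty$. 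This yields a uniform bound on $\|V_n\|_{L^p(K)}$ and, for $d\ge3$, a uniform bound $\#\sdim(V_n)\le N_0$ on the number of negative eigenvalues. By reflexivity I would extract a subsequence with $V_n\rightharpoonup V$ weakly in $L^p(K)$, so that $\spt V\subset K$ and $k\int|V|^p\le\liminf_n k\int|V_n|^p$ by weak lower semicontinuity of the norm.

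The heart of the proof is the stability of the negative spectrum under weak convergence of potentials. Denoting by
$$\mu_k(W)=\inf_{\substack{S\subset H^1\\ \dim S=k}}\ \sup_{\substack{u\in S\\ \|u\|_{L^2}=1}}\Big(\int|\nabla u|^2+\int W|u|^2\Big)$$
the $k$-th min-max level of $-\Delta+W$ (which equals the $k$-th negative eigenvalue when it is negative and equals $0$ otherwise), I would prove $\mu_k(V_n)\to\mu_k(V)$ for every fixed $k$. The decisive ingredient is the assumption $p>d/2$, which makes the embedding $H^1(K)\hookrightarrow L^{2p'}(K)$ compact (with $1/p+1/p'=1$); hence the perturbations $u\mapsto\int V_n|u|^2$ are uniformly relatively compact, and for fixed $u\in H^1$ one has $\int V_n|u|^2\to\int V|u|^2$ since $|u|^2\in L^{p'}(K)$. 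The bound $\limsup_n\mu_k(V_n)\le\mu_k(V)$ follows by inserting a near-optimal $k$-dimensional subspace for $V$, spanned by smooth functions, into the min-max for $V_n$ and using convergence of the forms on a finite-dimensional space. For the reverse inequality I would take near-optimal orthonormal families for $V_n$, show they are bounded in $H^1$, pass to weak $H^1$-limits converging strongly in $L^{2p'}(K)$ and in $L^2_{loc}$, and check that the limiting family stays $L^2$-orthonormal and admissible for the min-max of $V$; the compact support of the potentials guarantees that the relevant $L^2$-mass does not escape to infinity as long as the energies stay below a negative threshold.

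Finally I would pass to the limit in the spectral term. Writing the contribution of the $k$-th level as $a_k(W)=h(\mu_k(W))$ if $\mu_k(W)<0$ and $a_k(W)=0$ otherwise, so that $\sum_\lambda m_W(\lambda)h(\lambda)=\sum_k a_k(W)$, I would split $a_k=a_k^+-a_k^-$. For the positive part, lower semicontinuity of $h^+$ together with $\mu_k(V_n)\to\mu_k(V)$ gives $\liminf_n a_k^+(V_n)\ge a_k^+(V)$ for each $k$, and the discrete Fatou lemma yields $\liminf_n\sum_k a_k^+(V_n)\ge\sum_k a_k^+(V)$. For the negative part I would use that $h^-$ is upper semicontinuous with $h^-(0)=0$, which is a consequence of $h(0)\ge0$, so that levels crossing $0$ contribute nothing in the limit; combined with the uniform control of $\sum_k a_k^-(V_n)$ supplied by \eqref{liethi} (and the uniform bound $N_0$ when $d\ge3$), this gives $\limsup_n\sum_k a_k^-(V_n)\le\sum_k a_k^-(V)$. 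Combining the two estimates produces $\liminf_n\sum_k a_k(V_n)\ge\sum_k a_k(V)$, whence $\liminf_n F(V_n)\ge F(V)$ and $V$ is a minimizer.

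The step I expect to be the main obstacle is the spectral stability $\mu_k(V_n)\to\mu_k(V)$, and specifically the behaviour of eigenvalues approaching the bottom $0$ of the essential spectrum: there the exponential localization of eigenfunctions degenerates, so one must argue carefully that no $L^2$-mass escapes to infinity and that levels migrating across $0$ are harmless, which is exactly where the normalization $h(0)\ge0$ enters. The uniform summability needed to exchange $\limsup$ with the infinite sum in the negative part, relevant when $d=2$ where the number of bound states need not be uniformly bounded, is the other delicate point, to be handled through the Lieb-Thirring control of $\sum_k|\mu_k(V_n)|^{p-d/2}$.
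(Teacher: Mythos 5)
Your proposal is correct, and while its skeleton (direct method) and its bookends coincide with the paper's, the central step is genuinely different. The coercivity argument (Lieb--Thirring plus the Cwikel--Lieb--Rosenbljum bound, made sublinear via H\"older on the compact set $K$) and the endgame (splitting the spectral sum at a level $-\eps$, using $h(0)\ge0$ to neutralize eigenvalues migrating to $0$, and for $d=2$ the Lieb--Thirring inequality with shifted exponent $p-r$ to control the tail near $0$ uniformly) are essentially Steps 1 and 3 of the paper. The difference is the spectral stability step. The paper proves an operator-theoretic lemma (Lemma \ref{strongres}): weak $L^p$ convergence $V_n\rightharpoonup V$ implies strong $L^2$ convergence of resolvents; it then invokes convergence of spectral measures (citing \cite{conca}) to get weak* convergence of the eigenvalue counting measures $\mu_n=\sum_t m_{V_n}(t)\,\delta_t$ on $]-\infty,\lambda[$ for $\lambda\notin\sdim(V)$, and the limit passage becomes an integration of $h$ against these measures. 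You instead prove convergence of each Courant--Fischer level $\mu_k(V_n)\to\mu_k(V)$ by purely variational means (finite-dimensional test subspaces for the upper bound; $H^1$-bounded families, compactness of $H^1\hookrightarrow L^{2p'}(K)$ and tightness for the lower bound), and then sum termwise with Fatou-type arguments. What the paper's route buys: the ``no mass escapes to infinity'' issue is settled once and for all inside the resolvent lemma by the energy identity, which gives $\limsup_n\int|u_n|^2\le\int|u|^2$ and hence strong $L^2$ convergence, and the same Step 2 is reused verbatim for Theorem \ref{exth2}. What your route buys: it is elementary and self-contained, avoiding spectral-measure convergence theory altogether, and it shows level by level exactly where $h(0)\ge0$ and \eqref{lowerh} enter.

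One refinement you should make in the lower bound $\liminf_n\mu_k(V_n)\ge\mu_k(V)$: work with exact eigenfunctions rather than arbitrary near-optimal orthonormal families. An arbitrary near-optimal family can carry a small amount of mass arbitrarily far from $K$, so the localization mechanism you invoke does not apply to it directly; but since levels strictly below the essential spectrum are attained, exact eigenfunctions exist, and for an eigenfunction $u$ with eigenvalue $\le-\delta<0$ of a potential supported in $K$ one gets uniform tightness by testing the equation against $\chi_R^2u$, where $\chi_R$ vanishes on a ball containing $K$ and equals $1$ off a larger ball: this yields $\delta\int\chi_R^2|u|^2\,dx\le CR^{-2}$ uniformly in $n$. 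Tightness plus local compactness then upgrades weak $H^1$ convergence to strong $L^2$ convergence, so orthonormality passes to the limit and the limiting subspace is admissible in the min-max for $V$, closing the argument.
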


\begin{theo}\label{exth2}
Let $F$ be a cost functional as in \eqref{cost2}. We assume that the function $g$ is lower semicontinuous on $c_0(\R^-)$ (i.e. for the componentwise convergence) and satisfies the following coercivity condition:
\be\label{lowerh2}
g^-(\lambda)\le M+c|\lambda_1|^{p-d/2}\qquad\forall\lambda\in c_0(\R^-)
\ee
for suitable positive constants $M,c$ with $c<k/K_{p,d}$. Then the minimization problem
$$\min\Big\{F(V)\ :\ \spt V\subset K\Big\}$$
admits a solution provided the infimum is finite.
\end{theo}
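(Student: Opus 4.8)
The plan is to run the direct method of the calculus of variations, the real work being the lower semicontinuity of the spectral term under weak $L^p$ convergence. Take a minimizing sequence $(V_n)$ with $\spt V_n\subset K$ and $F(V_n)\to\inf F$; since the infimum is finite we may assume $F(V_n)\le C$. Writing $g\ge-g^-$ and combining the coercivity hypothesis \eqref{lowerh2} with the Keller inequality \eqref{Keller} applied to the ground state $\lambda_1(V_n)$ gives
\be
C\ge F(V_n)\ge -M-c|\lambda_1(V_n)|^{p-d/2}+k\int|V_n|^p\ge -M+(k-cK_{p,d})\int|V_n|^p .
\ee
As $c<k/K_{p,d}$ the coefficient $k-cK_{p,d}$ is positive, so $(V_n)$ is bounded in $L^p$. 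Since $p>d/2\ge1$ makes $L^p$ reflexive, up to a subsequence $V_n\rightharpoonup V$ weakly in $L^p$ with $\spt V\subset K$, and $k\int|V|^p$ is weakly lower semicontinuous. It then suffices to prove that $V\mapsto g(\Phi(\sdim(V)))$ is lower semicontinuous along this sequence, the conclusion following from the usual decomposition of the liminf of a sum.

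The first ingredient is the Mosco convergence of the quadratic forms $Q_n(u)=\int|\nabla u|^2+V_n|u|^2$ to $Q(u)=\int|\nabla u|^2+V|u|^2$ on $H^1$. The key point is that whenever $u_n\rightharpoonup u$ weakly in $H^1$, Rellich's theorem gives $u_n\to u$ strongly in $L^{2p'}(K)$ (here $2p'<2^*$ precisely because $p>d/2$), hence $|u_n|^2\to|u|^2$ strongly in $L^{p'}(K)$, while $V_n\rightharpoonup V$ weakly in $L^p(K)$; the weak--strong pairing yields $\int V_n|u_n|^2\to\int V|u|^2$. Together with the weak lower semicontinuity of the Dirichlet integral this is the liminf inequality of Mosco convergence, and the constant recovery sequence $u_n\equiv u$ furnishes the limsup inequality.

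Writing $R_V(u)=Q(u)/\|u\|_{L^2}^2$ for the Rayleigh quotient and $\lambda_k(V)$ for the $k$-th min-max value of $Q$ (which lies in $(-\infty,0]$, the essential spectrum of $-\Delta+V$ being $[0,+\infty)$ since $V$ has compact support), one has $\Phi(\sdim(V))_k=\lambda_k(V)$ for every $k$, so it suffices to prove $\lambda_k(V_n)\to\lambda_k(V)$ for each $k$. The upper bound $\limsup_n\lambda_k(V_n)\le\lambda_k(V)$ is immediate: one tests the min-max for $V_n$ with a fixed near-optimal $k$-dimensional subspace for $V$ and uses that $Q_n\to Q$ uniformly on the unit sphere of a finite-dimensional space (again the weak--strong mechanism, the sphere being compact). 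The main obstacle is the reverse inequality $\liminf_n\lambda_k(V_n)\ge\lambda_k(V)$, and this is exactly where the noncompactness of the resolvent enters: a priori an eigenvalue of $V_n$ could rise to the bottom $0$ of the essential spectrum while its eigenfunction lets its $L^2$ mass escape to infinity. To exclude this I would argue that when $\ell:=\liminf_n\lambda_k(V_n)<0$ the first $k$ eigenvalues of $V_n$ are, for large $n$, bounded above by $\ell/2<0$; since $V_n$ vanishes outside the fixed compact $K$, the corresponding eigenfunctions solve $-\Delta u=\lambda u$ with $\lambda\le\ell/2$ off $K$ and hence enjoy a uniform Agmon decay $\int e^{2\alpha|x|}|u_n^j|^2\le C$ for any $\alpha<\sqrt{|\ell|/2}$. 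This uniform tightness upgrades the weak $H^1$ convergence of the eigenfunctions to strong convergence in $L^2(\R^d)$, so $L^2$-orthonormality passes to the limit and the limits $u^1,\dots,u^k$ span a genuine $k$-dimensional space $E$.

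On $E$ the Mosco liminf inequality applied to $u_n=\sum_j c_j u_n^j\rightharpoonup u=\sum_j c_j u^j$ gives $Q(u)\le\liminf_n Q_n(u_n)\le\liminf_n\lambda_k(V_n)\|u\|_{L^2}^2=\ell$, whence $\lambda_k(V)\le\sup_E R_V\le\ell$; the case $\ell\ge0$ is trivial since $\lambda_k(V)\le0$. Combining the two bounds yields $\lambda_k(V_n)\to\lambda_k(V)$, i.e. componentwise convergence $\Phi(\sdim(V_n))\to\Phi(\sdim(V))$ in $c_0(\R^-)$, and the lower semicontinuity of $g$ then gives $g(\Phi(\sdim(V)))\le\liminf_n g(\Phi(\sdim(V_n)))$. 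Putting this together with the weak lower semicontinuity of $k\int|V|^p$ produces $F(V)\le\inf F$, so $V$ is the desired minimizer. I expect the Agmon decay and tightness step to be the only genuinely delicate point; the remainder is the standard direct method.
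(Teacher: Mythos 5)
Your proof is correct, but it follows a genuinely different route from the paper's. The paper funnels everything through Lemma \ref{strongres}: weak $L^p$ convergence $V_n\rightharpoonup V$ implies strong convergence of the resolvents, and then Step 2 of the proof of Theorem \ref{exth} converts this into convergence of the spectral measures and of the finite-rank spectral projections below any negative threshold outside $\sdim(V)$, yielding $\Phi(\sdim(V_n))\to\Phi(\sdim(V))$ componentwise; lower semicontinuity of $g$ then concludes, exactly as in your final step. You avoid resolvents and spectral measures altogether: identifying the components of $\Phi(\sdim(V))$ with the min-max values of the quadratic form (legitimate since $\sigma_{ess}(-\Delta+V)=[0,+\infty)$ for compactly supported $V\in L^p$, $p>d/2$), you prove two-sided convergence of these values --- the upper bound by testing with fixed finite-dimensional subspaces, the lower bound via a uniform Agmon decay estimate, which is exactly what excludes the escape of eigenfunction mass to infinity that the noncompact resolvent would otherwise permit, so that orthonormal systems pass to the limit. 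It is worth noting that the analytic engine is the same in both proofs, namely the weak--strong pairing $\int V_n|u_n|^2\,dx\to\int V|u|^2\,dx$ (the paper's \eqref{weakstrong}, your Mosco liminf), and that the uniform $H^1$ bound on the eigenfunctions which your weak-compactness extraction tacitly requires is obtained precisely as in the paper's estimate \eqref{est2}. What each approach buys: the paper's argument is shorter because the operator-theoretic facts (strong resolvent convergence entailing convergence of spectral projections onto isolated eigenvalues of finite multiplicity) are quoted from the literature; yours is more self-contained and variational, localizes the compactness issue explicitly in the Agmon estimate, and works uniformly in all dimensions without ever invoking the CLR bound \eqref{CLR}, in exchange for having to carry out the (standard, but not free) exponential-decay and min-max identifications that you correctly flag as the delicate points.
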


\begin{rema}
We stress that in the definition \eqref{cost} of the cost functional $F$, the multiplicity $m(\lambda)$ appears. However it is easy to check that Theorem \ref{exth} and Theorem \ref{exth2} still hold if that coefficient $m(\lambda)$ is removed, providing we assume the sub-additivity of the function $h$ (i.e. $h(s+t)\le h(s) + h(t)$). On the other hand the assumption \eqref{h0} will be important for the existence issue in order to penalize negative eigenvalues close to $0$. Note that for $d\ge 3$, thanks to the Cwikel-Lieb-Rosenbljum bound \eqref{CLR}, the sum in \eqref{cost} is a finite sum, which is not true for $d=2$. 
\end{rema}

\begin{rema}\label{variant} 
By the same proof, the existence of optimal potentials obtained in Theorems \ref{exth} and \ref{exth2} still holds for cost functionals $F$ of the form
$$\sum_{\lambda\in\sdim(V)}m_V(\lambda)\,h(\lambda)+G(V)\quad\text{or}\quad g(\Phi(\sdim(V)))+G(V)\;,$$
where $G$ is any weakly lower semicontinuous functional in $L^p$ such that $G(V)\ge k\int|V|^p\,dx$. This includes for instance the case of constrained optimization problems of the kind
$$\min\Big\{F(V) \ :\ \spt V\subset K,\ |V|\le1\Big\}\;,$$
where we may drop the coercivity assumptions \eqref{lowerh} or \eqref{lowerh2}.
\end{rema}

\begin{exam}\label{setE}
Let us consider a compact set $E\subset \, ]-\infty,0[$ and the function
$$h(t)=-1_E(t)=\begin{cases}
-1&\hbox{if }x\in E\\
0&\hbox{if }x\notin E.
\end{cases}$$
The function $h$ above satisfies the assumptions of the existence Theorem \ref{exth}, and therefore according to Remark \ref{variant} the optimization problem
$$\max\Big\{\sum_{\lambda\in E\cap\sdim(V)}m_V(\lambda)\ :\ \spt V\subset K,\ |V|\le1\Big\}$$
admits a solution. This solution is then a potential $V$ that, among the ones supported by $K$ and with values in $[-1,1]$, has the maximum number of negative discrete eigenvalues in $E$, counted with their multiplicity.
\end{exam}

\begin{exam}\label{LTex}
Consider now a number $p>d/2$ and the function
$$h(t)=-|t|^{p-d/2}.$$
The function $h$ above satisfies the assumptions of the existence Theorem \ref{exth}, and therefore, also using the Remark \ref{variant}, the optimization problem
$$\max\Big\{\sum_{\lambda\in\sdim(V)}m_V(\lambda)|\lambda|^{p-d/2}\ :\ \spt V\subset K,\ V\le0,\ \int|V|^p\,dx\le1\Big\}$$
admits a solution. Notice that this provides, among negative potentials $V$ supported by $K$, the best potential for the Lieb-Thirring inequality \eqref{liethi}.
\end{exam}

\begin{exam}\label{Kex}
Consider a fixed natural number $N$ and a lower semicontinuous function $g:\R^N\to]-\infty,+\infty]$. For instance we may take
$$g(\lambda)=\lambda_j$$
in which we look for the lowest possible $j$-th negative eigenvalue, or
$$g(\lambda)=\lambda_1-\lambda_2$$
where we look for the maximal gap between $\lambda_2$ and $\lambda_1$ (under the convention that we take $\lambda_2=0$ whenever $\lambda_1$ is the only element of $\sdim(V)$). By the existence Theorem \ref{exth2} and Remark \ref{variant}, we deduce that the optimization problem
$$\min\Big\{g\big(\lambda_1(V),\dots,\lambda_N(V)\big)\ :\ \spt V\subset K,\ -1\le V\le0\Big\}$$
admits a solution. 
\end{exam}

%%%%%%%%%%%%%%%%%%%%%%%%%%%%%%
\section{Proof of the results}\label{sproo}

We start by two useful lemmas.

\begin{lemm}\label{coercive} (Coercivity)
Let $p>d/2$ and let $h$ be a function satisfying the assumption \eqref{lowerh}. Then the functional $F$ in \eqref{cost}is coercive in $L^p$.
\end{lemm}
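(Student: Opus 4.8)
The plan is to show that $F$ in \eqref{cost} is coercive in $L^p$, meaning that $F(V)\to+\infty$ whenever $\|V\|_{L^p}\to+\infty$ (or equivalently that sublevel sets $\{F\le C\}$ are bounded in $L^p$). The whole point is that the term $k\int|V|^p\,dx$ grows like $\|V\|_{L^p}^p$, while the spectral sum $\sum_{\lambda}m_V(\lambda)h(\lambda)$ can only drive $F$ down through its negative part. So the first thing I would do is estimate the negative part of the spectral sum from below using the coercivity hypothesis \eqref{lowerh} on $h^-$, and then show it cannot overcome the $L^p$ term because the constant $c$ is strictly smaller than $k/L_{p,d}$.

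Concretely, for the case $d\ge3$ I would write
\be\label{coerc-bound}
\sum_{\lambda\in\sdim(V)}m_V(\lambda)\,h(\lambda)\ge-\sum_{\lambda\in\sdim(V)}m_V(\lambda)\,h^-(\lambda)
\ge-\sum_{\lambda\in\sdim(V)}m_V(\lambda)\big(M+c|\lambda|^{p-d/2}\big).
\ee
The first sum on the right, $\sum_\lambda m_V(\lambda)M=M\,\#\sdim(V)$, is controlled by the Cwikel--Lieb--Rosenbljum bound \eqref{CLR}, giving a term $\le M\,C_{q,d}\int|V^-|^q\,dx$; the second sum, $c\sum_\lambda m_V(\lambda)|\lambda|^{p-d/2}$, is controlled by the Lieb--Thirring inequality \eqref{liethi}, giving $\le c\,L_{p,d}\int|V^-|^p\,dx$. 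Putting these together yields a lower bound of the form
$$F(V)\ge\big(k-cL_{p,d}\big)\int|V|^p\,dx-M\,C_{q,d}\int|V^-|^q\,dx,$$
and since $c<k/L_{p,d}$ the leading coefficient $k-cL_{p,d}$ is strictly positive.

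The remaining issue, which I expect to be the main technical obstacle, is the lower-order term $M\,C_{q,d}\int|V^-|^q\,dx$ produced by the CLR bound: since $q$ may differ from $p$, this is not directly absorbed by the $L^p$ term. Here I would use that the support of $V$ is contained in the fixed compact set $K$, so that by H\"older's inequality $\int|V^-|^q\le|K|^{1-q/p}\big(\int|V^-|^p\big)^{q/p}$ with exponent $q/p<1$ (choosing $q=d/2$ is allowed only for $q\ge d/2$, so one takes $q$ slightly above $d/2$ and below $p$). A term growing like $\|V\|_{L^p}^q$ with $q<p$ is negligible compared to the positive $\|V\|_{L^p}^p$ term, so it can be absorbed: for $\|V\|_{L^p}$ large one has $(k-cL_{p,d})\|V\|_{L^p}^p-C'\|V\|_{L^p}^q\to+\infty$. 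For the case $d=2$ the CLR bound is unavailable, which is exactly why \eqref{lowerh} in dimension two drops the constant $M$ and uses the exponent $p-1$; in that regime I would invoke only the Lieb--Thirring inequality \eqref{liethi} (again with the leading constant $c<k/L_{p,2}$) and the boundedness of $K$ to absorb the resulting lower-order contribution, so no additive $M$ term arises and the same conclusion follows.
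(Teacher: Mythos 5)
Your proof is correct and follows essentially the same route as the paper: bound the spectral sum from below via \eqref{lowerh}, control the $c|\lambda|^{p-d/2}$ part by the Lieb--Thirring inequality \eqref{liethi} and the constant part $M\,\#\sdim(V)$ by the CLR bound \eqref{CLR}, then absorb the resulting lower-order term using the compact support; you even make explicit the H\"older absorption step that the paper leaves implicit. One small remark: since \eqref{CLR} is stated for all $q\ge d/2$, taking $q=d/2$ directly (as the paper does) is perfectly admissible, so there is no need to take $q$ slightly above $d/2$.
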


\begin{proof}
Let $(V_n)$ be such that $F(V_n) \le C$. Then, by Lieb-Thirring inequality
\[\begin{split}
C\ge&F(V_n)\ge k\int|V_n|^p\,dx-\sum_{\lambda\in\sdim(V_n)}m_{V_n}(\lambda)h^-(\lambda)\\
\ge&k\int|V_n|^p\,dx-\sum_{\lambda\in\sdim(V_n)}m_{V_n}(\lambda)(M+c|\lambda|^{p-d/2})\\
\ge&(k-cL_{p,d})\int|V_n|^p\,dx-M\#\sdim(V_n)
\end{split}\]
being $M=0$ if $d=2$. The conclusion is straightforward if $d=2$ whereas, if $d\ge3$, it follows from the CLR inequality \eqref{CLR} with exponent $q=d/2$.
\end{proof}

% Assume by contradiction that there exist $u_n$ such that
%$$\int|\nabla u_n|^2+Vu_n^2\,dx\le-n\int u_n^2\,dx.$$
%Then
%$$\int|\nabla u_n|^2+nu_n^2\,dx\le-\int Vu_n^2\,dx\le\int V^-u_n^2\,dx$$
%and we may assume $\int V^-u_n^2\,dx=1$. Then $u_n\to0$ strongly in $L^2$, $u_n$ is bounded in $H^1$, and passing to a subsequence we may also assume that $u_n\to0$ a.e. The Sobolev embedding of $H^1$ into $L^{2d/(d-2)}$ gives for any Borel subset $E$:
%$$\int_EV^-u_n^2\,dx\le C\left(\int_E|V^-|^{d/2}dx\right)^{2/d}$$
%which allows us to apply the Vitali convergence theorem and conclude that $\int V^-u_n^2\,dx\to0$, in contradiction with what assumed.

\begin{lemm}\label{strongres}
Let $V_n$ be a sequence of potentials converging to a potential $V$ weakly in $L^p$ with $p>d/2$. Then we have $R_n\to R$ strongly in $L^2$, where $R_n$ and $R$ are the resolvent operators corresponding to $V_n$ and $V$ respectively.
\end{lemm}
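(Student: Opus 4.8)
The plan is to reduce the statement, via the resolvent identity written in symmetric (Birman--Schwinger) form, to the norm convergence of the ``sandwiched'' potential operators $R_0^{1/2}V_nR_0^{1/2}$, where $R_0=(-\Delta+c)^{-1}$, and then to prove the latter by turning the weak $L^p$ convergence into a norm convergence through the compact Sobolev embedding on the fixed compact set $K$.

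First I would record the uniform bounds. Weak convergence gives $\sup_n\|V_n\|_{L^p}<\infty$, while $V\in L^p$ and $\spt V\subset K$ are preserved in the limit. Since every $V_n$ is infinitesimally form-bounded with respect to $-\Delta$ with a constant depending only on $\|V_n\|_{L^p}$ (this is where $p>d/2$ enters, uniformly for $d=2$ and $d\ge3$), I can fix once and for all a constant $c>0$ so large that $A_n:=-\Delta+V_n+c\ge 1$ and $A:=-\Delta+V+c\ge1$ as quadratic forms, uniformly in $n$. I then take $R_n:=A_n^{-1}$ and $R:=A^{-1}$ (the particular spectral point is immaterial, since norm resolvent convergence at one point of the common resolvent set propagates to all of them). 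Setting $R_0=(-\Delta+c)^{-1}$ and $B_n:=R_0^{1/2}V_nR_0^{1/2}$, $B:=R_0^{1/2}VR_0^{1/2}$ --- which are compact self-adjoint operators on $L^2$ by the relative form-compactness of $L^p$ potentials with $p>d/2$ --- the identity $A_nR_0^{1/2}=R_0^{-1/2}(I+B_n)$ yields the form representation $R_n=R_0^{1/2}(I+B_n)^{-1}R_0^{1/2}$, and likewise for $R$; here $I+B_n$ is invertible because a nontrivial kernel would produce an eigenfunction of $A_n$ at $0$, impossible since $A_n\ge1$. Consequently
\[\|R_n-R\|_{\LL(L^2)}\le\|R_0\|_{\LL(L^2)}\,\big\|(I+B_n)^{-1}-(I+B)^{-1}\big\|_{\LL(L^2)},\]
and everything is reduced to proving $\|B_n-B\|_{\LL(L^2)}\to0$.

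For this crux I would estimate the bilinear form of $B_n-B=R_0^{1/2}(V_n-V)R_0^{1/2}$. Writing $W_n:=V_n-V\rightharpoonup0$ in $L^p(K)$, I have $\langle(B_n-B)f,g\rangle=\int_K W_n\,(Ef)\,\overline{(Eg)}$, where $E$ is the operator $R_0^{1/2}$ followed by restriction to a fixed bounded neighbourhood $K'$ of $K$. Since $R_0^{1/2}$ maps $L^2$ into $H^1$ and the embedding $H^1(K')\hookrightarrow L^q(K')$ is compact (Rellich) for every $q<2^*$, the operator $E\colon L^2\to L^q(K')$ is compact. The assumption $p>d/2$ gives $2p'<2^*$, so I may fix $q$ with $2p'\le q<2^*$; then, by continuity of the multiplication map $L^q\times L^q\to L^{q/2}$, the set $\{(Ef)\overline{(Eg)}:\|f\|_{L^2},\|g\|_{L^2}\le1\}$ is precompact in $L^{q/2}(K')$ and hence in $L^{p'}(K')$. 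Because $W_n\rightharpoonup0$ in $L^p(K')=(L^{p'}(K'))^*$ and weak convergence is uniform on norm-compact subsets of the predual, $\sup_{\|f\|,\|g\|\le1}\big|\int_KW_n(Ef)\overline{(Eg)}\big|\to0$, that is $\|B_n-B\|_{\LL(L^2)}\to0$.

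Finally, since $I+B$ is boundedly invertible (trivial kernel, $B$ compact) and $B_n\to B$ in norm, continuity of operator inversion gives $(I+B_n)^{-1}\to(I+B)^{-1}$ in norm, whence $\|R_n-R\|_{\LL(L^2)}\to0$. I expect the genuine difficulty to be precisely the crux of the third paragraph: upgrading the merely weak convergence of the potentials to the norm convergence of the associated resolvents. This is exactly where the two structural hypotheses enter in an essential way --- the compactness of the common support $K$ (so that the test functions $(Ef)\overline{(Eg)}$ live in a fixed $L^{p'}(K')$ and form a precompact family) and the strict inequality $p>d/2$ (so that the Sobolev exponent leaves room to place $q$ between $2p'$ and $2^*$); without compact support one would in general only obtain strong, not norm, resolvent convergence.
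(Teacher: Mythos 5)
Your proof is correct, but it takes a genuinely different --- and in fact stronger --- route than the paper's. The paper argues pointwise: it fixes $f\in L^2$, writes the resolvent equations $-\Delta u_n+(V_n+k)u_n=f$ (with the shift $k$ chosen via Lieb--Thirring so that $-\Delta+V_n+k\ge1$ uniformly), derives a uniform $H^1$ bound through a truncation of $V_n^-$ combined with the uniform $L^p$ bound, passes to the limit in $\int V_n|u_n|^2\,dx$ by Rellich plus Vitali on the compact set $K$, and finally upgrades weak to strong $L^2$ convergence of $u_n$ through the energy identity and weak lower semicontinuity of the $H^1$-norm; this gives strong operator convergence $R_nf\to Rf$, propagated to all resolvent points by the classical resolvent-identity argument. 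You instead prove norm convergence $\|R_n-R\|_{\LL(L^2)}\to0$ via the Birman--Schwinger factorization $R_n=R_0^{1/2}(I+B_n)^{-1}R_0^{1/2}$, reducing everything to $\|B_n-B\|_{\LL(L^2)}\to0$, which you get from compactness of $R_0^{1/2}$ followed by restriction near $K$ (Rellich, using $2p'<2^*$, exactly where $p>d/2$ enters) together with the fact that weak$^*$ null sequences converge uniformly on norm-compact subsets of the predual; your replacement of the Lieb--Thirring shift by uniform infinitesimal form-boundedness is equally valid. Both proofs exploit the same two structural hypotheses (compact support of the potentials and $p>d/2$), but your conclusion is strictly stronger, and it is arguably better adapted to how the lemma is used in Step 2 of the proof of Theorem \ref{exth}: there the paper deduces convergence of eigenvalues and of finite-rank spectral projections, which follows at once from norm resolvent convergence, whereas extracting it from mere strong (pointwise) resolvent convergence requires the additional spectral-measure argument the paper sketches. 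The price is heavier operator-theoretic machinery (the KLMN/form-sum representation of $-\Delta+V_n$, needed since $V_n$ is only form-bounded, and the Fredholm alternative to invert $I+B_n$), against the paper's elementary energy estimates.
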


\begin{proof}
By the Lieb-Thirring inequality \eqref{liethi} all the negative eigenvalues of $-\Delta+V_n$ and of $-\Delta+V$ are uniformly bounded from below; let us take $k>0$ such that
$$k+\lambda_1(V_n)\ge1\qquad\forall n\in\N.$$
If $f\in L^2$ let us denote by $u_n,u\in H^1_0$ the solutions of
\be\label{resolveq}
-\Delta u_n+(V_n+k)u_n=f,\qquad-\Delta u+(V+k)u=f.
\ee

Since $k+\lambda_1(V_n)\ge1$ we have
$$\int u_n^2\,dx\le\int|\nabla u_n|^2+(V_n+k)u_n^2\,dx=\int fu_n\,dx$$
from which we deduce that $u_n$ is bounded in $L^2$. We show now that $u_n$ is bounded in $H^1$. By the equality above we obtain
\be\label{est1}
\int|\nabla u_n|^2\,dx\le C+\int V_n^-u_n^2\,dx.
\ee
We fix now a constant $M>0$; we have
\be\label{est2}
\begin{split}
\int V_n^-u_n^2\,dx&\le M\int_{\{V_n^-\le M\}}u_n^2\,dx+\int_{\{V_n^->M\}}u_n^2\,dx\\
&\le CM+\left(\int|u_n|^{2^*}dx\right)^{2/2^*}\left(\int_{\{V_n^->M\}}|V_n^-|^{d/2}\right)^{2/d}\\
&\le CM+C\left(\int|\nabla u_n|^2\,dx\right)\|V_n^-\|_{L^p}|\{V_n^->M\}|^{(2p-d)/pd}.
\end{split}
\ee
Since
$$|\{V_n^->M\}|\le\int_{\{V_n^->M\}}\frac{V_n^-}{M}\,dx\le\frac1M\|V_n^-\|_{L^p}|\{V_n^->M\}|^{(p-1)/p},$$
we obtain
$$|\{V_n^->M\}|\le CM^{-p},$$
and from \eqref{est2}
$$\int V_n^-u_n^2\,dx\le CM+\frac{C}{M^{(2p-d)/d}}\int|\nabla u_n|^2\,dx.$$
Taking $M$ such that $M^{(2p-d)/d}=2C$, from \eqref{est1} we deduce that $\int|\nabla u_n|^2\,dx$ is bounded.

Then we have that $u_n$ converges to $u$ in $L^2_{loc}$. Moreover, we can deduce from H\"older inequality that
\be\label{weakstrong}
\lim_{n\to\infty}\int V_n|u_n|^2\,dx=\int V|u|^2\,dx.
\ee
Indeed $|u_n|^2\to|u|^2$ a.e. and by Sobolev inequality
$$\int|u_n|^{2p'+\eps}\,dx=
\int|u_n|^{2^*}\,dx\le C\left(\int|\nabla u_n|^2\right)^{2^*/2},$$
being $\eps = 2^* -2 p' $. Then it follows from Vitali's convergence Theorem that $|u_n|^2 \to |u|^2$ strongly in $L^{p'}(K)$
and so \eqref{weakstrong} follows.

To finish the prove we need only to check that $u_n \to u$ strongly in $L^2$. By \eqref{resolveq}, we have
\[\begin{split}
&\int|\nabla u_n|^2\,dx+\int(V_n+k)|u_n|^2\,dx=\int fu_n\,dx\;,\\
&\int|\nabla u|^2 + \int (V + k)|u|^2\,dx=\int fu\,dx\;.
\end{split}\]
By the weak $L^2$ lower semicontinuity of the $H^1$-norm and \eqref{weakstrong}and recalling that $k>0$, we deduce that $\limsup_n \int |u_n|^2 \le \int |u|^2$
hence the conclusion. 
The strong convergence of resolvents follows by the classical argument that it is enough to check it for only one value outside the spectra.
\end{proof}

We are now in a position to prove the result of Theorem \ref{exth}.

\begin{proof}[Proof of Theorem \ref{exth}]We divide the proof in three steps.

\medskip

{\bf Step 1. }Consider a minimizing sequence $(V_n)$; thanks to the coercivity Lemma \ref{coercive}, the sequence $(V_n)$ is bounded in $L^p$ and so we may assume, up to extracting a subsequence, that it converges weakly to some function $V\in L^p$.

\bigskip

{\bf Step 2. }Since $V_n$ converges weakly to $V$, by Lemma \ref{strongres} we have the strong convergence of resolvent operators and hence that of the principal eigenvalues $\lambda_1(V_n)\to\lambda_1(V)$. In addition, the spectral measures $E_n$ related to the self-adjoint operators $-\Delta+V_n$ weakly converge to the spectral measure $E$ of $-\Delta+V$ (see for instance \cite{conca}). In other words, for every $\lambda$ which does not belong to the spectrum of $-\Delta+V$ we have
\be\label{weakmeas1}
\langle E_n(\lambda)\phi,\psi\rangle\to\langle E(\lambda)\phi,\psi\rangle\qquad\forall\phi,\psi\in L^2.
\ee
Since $\sdim(V)$ is finite, we have
\be\label{weakmeas2}
dE_n\big|_{]-\infty,\lambda[}=\sum_{t\in\sdim(V_n)}\delta_t\,P_{X_n(t)}
\ee
where $\delta_t$ is the Dirac mass at $t$ and $P_{X_n(t)}$ is the orthogonal projector on the finite dimensional eigenspace $X_n(t)$ associated to the eigenvalue $t$. From \eqref{weakmeas1} and \eqref{weakmeas2} we deduce that for any $\lambda<0$, with $\lambda\notin\sdim(V)$, $dE_n\big|_{]-\infty,\lambda[}\to dE\big|_{]-\infty,\lambda[}$ weakly in the sense of operators of finite rank and hence strongly. In particular, taking the trace on both sides, we deduce that
\be\label{spectralCV}
\mu_n\big|_{]-\infty,\lambda[}\to\mu\big|_{]-\infty,\lambda[}
\quad\forall\lambda\in\R^-\setminus\sdim(V)\;,
\ee
where the convergence is intended in the weak* convergence of measures and the nonnegative measures $\mu_n,\mu$ are defined by
\be\label{defmu}
\mu_n:=\sum_{t\in\sdim(V_n)}m_{V_n}(t)\,\delta_t\;,\qquad\mu:=\sum_{t\in\sdim(V)}m_V(t)\,\delta_t\;.
\ee

\bigskip

{\bf Step 3. }With the notations introduced in \eqref{defmu}, we may write
$$F(V_n)=\int h(t) \, d\mu_n(t) + k \int |V_n|^P \, dx,\qquad F(V) = \int h(t) \, d\mu(t) + k \int |V|^P \, dx\ .$$
In view of Steps 1 and 2, and recalling that $V_n\to V$ weakly in $L^p$, the existence of an optimal potential will be achieved as soon as 
we show the lower semicontinuity of $F$ which reduces to the inequality 
\be\label{last}
\liminf_n \int h(t)\,d\mu_n(t)\ge\int h(t)\,d\mu(t)\;.
\ee

We start with the case $d\ge3$. By \eqref{CLR}, it holds $\int d\mu_n\le C$ for a suitable constant $C$. Let $\eps>0$ be such that $-\eps\notin\sdim(V)$. Then
$$\int h(t)\,d\mu_n(t)
\ge\int_{]-\infty,-\eps[}h(t)\,d\mu_n-C\sup_{[-\eps,0]}h^-\;.$$
By \eqref{spectralCV} and by the lower semicontinuity of $h$ we obtain
$$\liminf_n\int h(t)\,d\mu_n(t)\ge\int_{]-\infty,-\eps[}h(t)\,d\mu-C\sup_{[-\eps,0]}h^-.$$
The conclusion \eqref{last} follows by the assumption $h(0)\ge0$ letting $\eps\to 0$. 

Let us now consider the case $d=2$ in which the measures $\mu_n$ can be unbounded in the vicinity of zero. By the assumption \eqref{lowerh}, we have
$$\int h(t)\,d\mu_n(t)\ge
\int_{]-\infty,-\eps[}h(t)\,d\mu_n-c\int_{]-\eps,0[}|t|^{p-1}\,d\mu_n\;.$$
Let $r$ such that $0<r<p-1$. Thanks to the Lieb-Thirring inequality \eqref{liethi} with exponent $p-r$, we have
$$\int_{]-\eps,0[}|t|^{p-1}\,d\mu_n\le\eps^r\int_{]-\eps,0[}|t|^{p-1-r}\,d\mu_n
\le\eps^r\,L_{p-r,2}\,\int|V_n^-|^{p-r}\,dx\le C\,\eps^r\;,$$
and similarly for $\mu$. Therefore as $n\to\infty$, we obtain
$$\liminf_n\int h(t)\,d\mu_n(t)
\ge\int_{]-\infty,-\eps[}h(t)\,d\mu-cC\eps^r
\ge\int h(t)\,d\mu-2cC\eps^r\;,$$
thus the conclusion \eqref{last} as $\eps\to 0$.
\end{proof}

\begin{proof}[Proof of Theorem \ref{exth2}] The proof follows the same scheme as the one of Theorem \ref{exth}. The coercivity of $F$ can be obtained as in Lemma \ref{coercive} using the inequality \eqref{Keller}. Step 2 remains unchanged and so the only difference is in Step 3. It is enough to observe that, thanks to the convergence of resolvents of spectral measures proved in Step 2, we have the convergence $ \Phi(\sdim(V_n) \to \Phi(\sdim(V)$ in $c_0(\R^-)$ hence the conclusion by the lower semicontinuity of $g$.
\end{proof}

\bigskip

\ack This work started during a visit of the second author at IMATH of University of Toulon. A part of this paper was written during a visit of the authors at the Johann Radon Institute for Computational and Applied Mathematics (RICAM) of Linz. The authors gratefully acknowledge both Institutes for the excellent working atmosphere provided. The second author is member of the Gruppo Nazionale per l'Analisi Matematica, la Probabilit\`a e le loro Applicazioni (GNAMPA) of the Istituto Nazionale di Alta Matematica (INdAM), and his work is part of the project 2010A2TFX2 {\it``Calcolo delle Variazioni''} funded by the Italian Ministry of Research and University.

%%%%%%%%%%%%%%%%%%%%%%%%%%%%%%

\bigskip
{\small\noindent
Guy Bouchitt\'e:
Laboratoire IMATH,
Universit\'e de Toulon\\
BP 20132,
83957 La Garde Cedex - FRANCE\\
{\tt bouchitte@univ-tln.fr}\\
{\tt https://sites.google.com/site/gbouchitte/home}

\bigskip\noindent
Giuseppe Buttazzo:
Dipartimento di Matematica,
Universit\`a di Pisa\\
Largo B. Pontecorvo 5,
56127 Pisa - ITALY\\
{\tt buttazzo@dm.unipi.it}\\
{\tt http://www.dm.unipi.it/pages/buttazzo/}

\end{document}